\documentclass[11pt]{article}

\usepackage[a4paper,margin=1.15in]{geometry}
\usepackage{enumitem}
\usepackage{amsmath, amsfonts, amsthm, amssymb}
\usepackage{mathrsfs}
\usepackage{booktabs}
\usepackage{array}
\usepackage{bbm}
\usepackage{apptools}
\usepackage{chngcntr}
\usepackage{xfrac}
\usepackage[hidelinks]{hyperref}
\AtAppendix{\counterwithin{theorem}{section}}
\newcommand {\ti}{\rightarrow \infty}

\newcommand {\e}{\epsilon}

\newcommand {\p}{\mathbb{P}}
\newcommand {\x}{\mathbb{E}}

\newcommand {\ind}{\mathbbm{1}}

\newtheorem{theorem}{Theorem}
\newtheorem*{theorem*}{Theorem}
\newtheorem{lemma}[theorem]{Lemma}
\newtheorem{proposition}[theorem]{Proposition}

\newtheorem{definition}{Definition}[section]
\newtheorem{remark}{Remark}
\newtheorem{hypothesis}{Hypotheses}

\let\emptyset\varnothing
\title{Critical branching processes in random environment with immigration and an application to randomised reproducing graphs}
\author{
Simon Irons\\
\small University of Sheffield
\and
Jonathan Jordan\\
\small University of Sheffield
}
\date{}

\begin{document}

\maketitle

\begin{abstract}
We study branching processes in an i.i.d.\ random environment with immigration in the critical regime, where the underlying offspring mechanism satisfies the critical condition that the log of the average population growth, across environments, and before immigration, is zero.   In this setting environmental fluctuations are balanced on average, and the long-term behaviour is determined by the interaction between these fluctuations and the immigration sequence.   While recurrence and transience criteria for critical BPREI were established by Bauernschubert (2014), the possibility of null recurrence remained unresolved. 

We show that, under natural integrability assumptions on the offspring and immigration distributions, a critical BPREI is null recurrent.  In particular, the process returns to zero infinitely often but admits no stationary distribution.  Our results close a gap in the classification of the critical regime and provide a structural understanding of the balance between environmental variability and immigration.

As an application, we resolve the open critical case of the Randomised Reproducing Graph (`RRG') model introduced by Jordan (2011), showing that in the critical regime the proportion of vertices of a fixed degree admits no limiting distribution.
\end{abstract}
\medskip
\noindent\textbf{Keywords.} critical branching process in random environment with immigration ; null recurrence
; randomised reproducing graph ; degree distribution.
\medskip
\noindent\textbf{MSC 2020.} Primary 60J80; 60K37, Secondary 60J85; 05C80.
\section{Introduction and main result}
Branching processes in a random environment (`BPRE') and branching processes with immigration (`BPI') have each been well studied; see, for example, \cite{AGKV05,AK71,SW69}.  However, material combining both features, branching processes in a random environment with immigration (BPREI), is comparatively sparse, and even in the relatively tractable setting of i.i.d.\ environments and immigration, the literature is often not straightforward to navigate.  Examples of BPREI literature include \cite{KE87,BS14,RO07}.

In this paper we consider the critical i.i.d.\ case, where the underlying offspring mechanism satisfies
\begin{equation*}\x_\mathcal E[\log \mu_i] = 0,\end{equation*}
that is the log of the average population growth $\mu_i$, across environments $\mathcal E$, before immigration, is zero. In this regime the process is finely balanced with environmental fluctuations competing with immigration.  Progress in understanding this balance was made by Bauernschubert \cite{BS14}, who established criteria distinguishing recurrence from transience in the critical BPREI, identifying a threshold in terms of the tail behaviour of the immigration distribution.  However, the possibility of null recurrence was not addressed.  In the present paper we resolve this question and prove, in Theorem~\ref{th:null}, that under a natural set of assumptions the critical BPREI is null recurrent.

This work was motivated by a gap in the classification of the Randomised Reproducing Graph (RRG) introduced by Jordan \cite{JO11}. 
In that model the asymptotic behaviour falls into a recurrent/transient dichotomy depending on whether certain growth parameters are subcritical or supercritical.  The critical case, however, remained open and has proved delicate, resisting approaches based on Foster--Lyapunov techniques and related methods.  Our main result enables us to close this gap: in Theorem~\ref{th:RRG} we show that, in the critical RRG, the proportion of vertices with a given degree admits no limiting distribution.

The paper is organized into a section on definitions and theory and a section that uses the RRG problem as an example of application.

\section{Definitions and theory}
To prove null recurrence of the critical irreducible BPREI, we will argue by contradiction.  We suppose the process is positive recurrent so that it admits a unique stationary distribution $\pi$.  In particular, if $(Z_n)_{n\ge0}$ is started with $Z_0\sim\pi$, the process is stationary and its law is invariant under time shifts.

By shift-invariance we may extend the process to a version $(Z_n)_{n\in\mathbb Z}$ indexed by the full integer line such that $(Z_n)_{n\ge -m}$ has the same law as the original stationary process for every $m\ge0$.  Thus the stationary law may be realized at all times $-n$, and we may analyse the population at time $0$ as the accumulated contribution of immigrants arriving in the remote past.  We begin by defining the random environment which is the underlying probability space on which randomness is defined.  
\begin{definition}[The random environment]
Let $(\mathcal E_n)_{n\in\mathbb Z}$ be an i.i.d.\ sequence of random variables, called the \emph{environment}.  For each $n\in\mathbb Z$,
the value of $\mathcal E_n$ determines the reproduction and immigration laws in force between generations $n$ and $n+1$.  Conditional on $(\mathcal E_n)_{n\in\mathbb Z}$, we define:
\begin{itemize}
	\item For each $n\in\mathbb Z$, a sequence $(\xi_{n,i})_{i\ge1}$ of offspring random variables that are identically distributed over $i$.
	\item For each $n\in\mathbb Z$, an immigration random variable $M_n$.
\end{itemize}

We assume the following independence, given $(\mathcal E_n)_{n\in\mathbb Z}$
\begin{itemize}
	\item For each fixed $n$, the variables $(\xi_{n,i})_{i\ge1}$ are independent for all $n$ and $i$ and are independent of $M_n$.
	\item For $n\neq m$, the collections $\{(\xi_{n,i})_{i\ge1},\,M_n\}$ and $\{(\xi_{m,i})_{i\ge1},\,M_m\}$ are independent.
\end{itemize}
\end{definition}
We write
\begin{equation*}\mu_n := \mathbb E[\xi_{n,1}\mid \mathcal E_n],\qquad\sigma_n^2 := \operatorname{Var}(\xi_{n,1}\mid \mathcal E_n)\end{equation*}
for the conditional mean and variance of the offspring distribution at time $n$.  Next we define one-sided branching processes that are those that start at a time $n\in \mathbb Z$. 
\begin{definition}[Branching process started at time $n$]
A \emph{branching process in the random environment} $(\mathcal E_k)_{k\in\mathbb Z}$ \emph{with immigration} started at time $n\in\mathbb Z$ is a process $(Z^{(n)}_{n+k})_{k\ge0}$, with  $Z^{(n)}_n\in\mathbb Z_{\ge0}$ and defined recursively by
\begin{equation}
	Z^{(n)}_{n+k+1}=\sum_{i=1}^{Z^{(n)}_{n+k}} \xi_{n+k,i}+M_{n+k+1},\qquad k\ge0.\label{eq:BP1}
\end{equation}
A \emph{branching process in the random environment} $(\mathcal E_k)_{k\in\mathbb Z}$ is defined using \eqref{eq:BP1}  but with $M_{n+k+1}=0$ a.s. for all $k\ge 0$.
\end{definition}
In section 2 where we refer to both BPREI and BPRE, to distinguish when we specifically mean a BPRE rather than a BPREI, we will use $\hat{Z}_n$ for the BPRE.

A two sided version of a BPREI is given by $\lim_{n\ti} Z^{(-n)}_{-n+k+1}$, that is stochastic processes that satisfy the following definitions.
\begin{definition}[Two-sided branching process in a random environment]
A \emph{two-sided branching process in the random environment} $(\mathcal E_k)_{k\in\mathbb Z}$ \emph{with immigration} is a process $(Z_n)_{n\in\mathbb Z}$ with $Z_n\in\mathbb Z_{\ge0}$ for all $n\in\mathbb Z$ satisfying
\begin{equation}
	Z_{n+1}=\sum_{i=1}^{Z_n} \xi_{n,i}+M_{n+1},\qquad n\in\mathbb Z,\label{eq:BP2}
\end{equation}
whenever such a process exists.
\end{definition}
\begin{remark}
A two-sided process $(Z_n)_{n\in\mathbb Z}$ satisfying \eqref{eq:BP2} does not exist in general.  However, if the one-sided process
$(Z^{(0)}_k)_{k\ge 0}$ is positive recurrent and so admits a stationary probability distribution $\pi$ on $\mathbb N_0$, then there exists a
stationary version $(Z_n)_{n\in\mathbb Z}$ with $Z_0\sim\pi$ satisfying \eqref{eq:BP2} for all $n\in\mathbb Z$.
\end{remark}
We make the following additional assumptions
\begin{hypothesis}[H]\label{hy:standing}
Let $f_n$ be the probability generating function for $\xi_{n,i}=\xi_{n,1}$ so that $\mu_n=f'_n(1)$.
\begin{enumerate}
	\item[\textbf{(H1)}] \textbf{Criticality and non-degeneracy}\begin{equation*}\x[\log \mu_0]=0,\qquad\p[\mu_0=1]<1.\end{equation*}
	\item[\textbf{(H2)}] \textbf{Reproduction probability and moment assumptions}  $\p[\xi_{0,1}=0]>0$ and let $f$ denote the generic offspring p.g.f.\ so that $f\stackrel{d}{=}f_0$, then there exists, $C$ and $\delta>0$, such that
	\begin{equation*}f''(1)<\infty\;\text{a.s.},\qquad\frac{f''(1)}{(f'(1))^2}<C\;\text{a.s.},\qquad\x\big[|\log f'(1)|^{2+\delta}\big]<\infty.\end{equation*}
	\item[\textbf{(H3)}] \textbf{Immigration assumptions.} There exists $\delta>0$ such that
	\begin{equation*}\p[M_0\ge1]>0,\qquad\p[M_0=0]>0,\quad \x[(\log^+ M_0)^{2+\delta}]<\infty.\end{equation*}
\end{enumerate}
\end{hypothesis}

Let $X_i:=\log \mu_{i-1}$, then, as the environment variables are i.i.d., $(X_i)_{i\ge1}$ is an i.i.d.\ sequence and $\x[X_i]=\x[X_1]$.  Depending on whether $\x[X_1]$ is less than, equal to or greater than 0 the BPREI is said to be sub-critical, critical or super-critical.  Here we are interested in the critical case $\x[X_i]=\x[X_1]=0$.

Geiger and Kersting consider critical branching processes in a random environment for $(\hat{Z}_n)_{n\geq 0}$ in \cite{GK01} for a one-sided process.  In the two sided setting we are interested in the probability of offspring of an individual at time $-n$ surviving until time 0.  The event that a single individual present at time $-n$ has descendants alive at time 0 depends only on the reproduction and environments at times $-n+1, -n+2,\dots, 0$.  Because the environments are i.i.d.\ this block has the same law as a block of the first $n$ generations in the one-sided model.  By shift invariance, the probability of survival from time $-n$ to time 0 is therefore the same as the probability that the one-sided process survives for $n$ generations.  We define this probability as $q_n$, that is,
\begin{equation}
	q_n=\p[\hat{Z}_n>0\mid \mathcal E_0,\dots, \mathcal E_{n-1}]=\p[\hat{Z}_0^{(-n)}>0\mid \mathcal E_{-n+1},\dots, \mathcal E_{0}]\label{eq:qn1}.
\end{equation}
We define the realized probability generating function of the offspring random variables $\xi_{k,i}\overset{\mathrm{d}}{=}\xi_{k,1}$ to be $f_k$, that is, for $0\le s\le 1$
\begin{equation*}
	f_k(s):=\x[s^{\xi_{k,1}}\mid \mathcal E_k].
\end{equation*}
So that the conditioning in \eqref{eq:qn1} can be equivalently written
\begin{equation*}
	q_n=\p[\hat{Z}_n>0\mid f_0,\dots, f_{n-1}]=\p[\hat{Z}_0^{(-n)}>0\mid f_{-n+1},\dots, f_{0}].
\end{equation*}
Let $X_i:=\log f'_{i-1}(1)$ and define a random walk associated with the BPRE by $S_0^{(n)}=0$,
\begin{equation*}
	S_k^{(n)}:=\sum_{i=1}^k X_{-n+i}\qquad k=1,\dots,n
\end{equation*}
and
\begin{equation*}
	W_n:=\sum_{k=0}^{n}\exp(-S_k^{(n)}).
\end{equation*}
In Appendix A we formalise and extend a discussion in \cite{GK01} and use this in the following proposition.
\begin{proposition}[Survival of lineages from $-n$ infinitely often]\label{pr:qnio}
Under hypotheses (H1) and (H2) there exists $\e>0$ such that
\begin{equation*}
	\p[\;q_n>\e\; \text{i.o.}\;]=1.
\end{equation*}
\end{proposition}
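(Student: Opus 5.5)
Our plan is to bound $q_n$ from below in terms of the random walk functional $W_n$, and then show that $W_n$ stays bounded infinitely often almost surely. For the first step we would use the standard Agresti/Geiger--Kersting lower bound for survival probabilities of a BPRE (formalised in Appendix A). Write $\eta_k := f''_k(1)/(2 f'_k(1)^2)$. This bound gives
\begin{equation*}
q_n \ge \Big(\sum_{k=0}^{n-1} \eta_{-n+k+1}\, e^{-S_k^{(n)}} + e^{-S_n^{(n)}}\Big)^{-1}.
\end{equation*}
Under (H2) we have $\eta_k < C/2$ a.s., and therefore $q_n \ge \big(\max(1,C/2)\, W_n\big)^{-1}$. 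It then suffices to find a constant $K$ with $\p[W_n\le K \text{ i.o.}]=1$, and to take $\e=1/(\max(1,C/2)K)$.

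Next we reverse time in the walk. By the i.i.d.\ property of the environment, for each fixed $n$ the sequence $(X_{-n+1},\dots,X_0)$ has the same law as $(X_1,\dots,X_n)$, so $W_n$ has the law of $\sum_{k=0}^n e^{-S_k}$ for a standard random walk $S$. However, we need an almost sure statement along the two-sided sequence, and the walks $S^{(n)}$ for different $n$ are all built from the same variables $X_{-n+1},\dots,X_0$, read from the left end. Rewrite this with $T_j := \sum_{i=-j+1}^{0} X_i$, the backward walk from time $0$, with $T_0=0$. Then $S_k^{(n)} = T_n - T_{n-k}$, and hence
\begin{equation*}
W_n = e^{-T_n}\sum_{j=0}^{n} e^{T_j}.
\end{equation*}
Thus $W_n$ is small exactly when $T_n$ is close to its running maximum and the earlier values of $T_j$ lie well below it. Since $T$ is a mean-zero random walk with $2+\delta$ moments and non-degenerate steps (by (H1)), it is recurrent and oscillating. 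It attains strict ladder epochs (new maxima) $\tau_1<\tau_2<\dots$ infinitely often, and at those times $W_{\tau_m} = \sum_{j\le \tau_m} e^{-(T_{\tau_m}-T_j)}$.

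The main obstacle is to show that at infinitely many ladder epochs this sum is bounded. Our first attempt would use the ladder-height structure. Decompose the sum over the excursion blocks between consecutive ladder epochs. Since $T_{\tau_m}-T_{\tau_i}$ is a sum of $m-i$ i.i.d.\ positive ladder heights $H$, block $i$ contributes at most $e^{-(T_{\tau_m}-T_{\tau_i})}\,\sum_{j\in\text{block }i} e^{-(T_{\tau_i}-T_j)}$, and each block sum is at most the block length. This alone is not summable, so instead we would consider the reversed sequence of blocks seen from $\tau_m$. Its law is that of an i.i.d.\ sequence of (ladder height, excursion) pairs, and the relevant sum $\sum_{r\ge 0} e^{-(H_1+\dots+H_r)} B_{r+1}$ is an a.s.\ finite perpetuity, because $\x[\log^+ B]<\infty$ follows from the moment assumption. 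So the distribution of $W_{\tau_m}$ is tight, and $\p[W_{\tau_m}\le K]\ge 1/2$ for a suitable $K$ and all $m$.

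To upgrade this to "infinitely often" almost surely, we would note that $\{W_n\le K \text{ i.o.}\}$ is a tail-type event of the i.i.d.\ sequence $(X_{-i})$. Indeed, changing finitely many $X$'s multiplies the late $W_n$ by a bounded random factor, up to an adjustment of $K$. We would therefore apply Hewitt--Savage, or Fatou's lemma in the form $\p[\limsup_m\{W_{\tau_m}\le K\}]\ge \limsup_m \p[W_{\tau_m}\le K]\ge 1/2$. Combined with the zero--one law this gives probability one, possibly after enlarging $K$ to absorb the finite perturbation. Choosing $\e$ from this $K$ then completes the proof.
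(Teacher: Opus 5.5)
Your reduction to $W_n$ (via $S^{(n)}_k=T_n-T_{n-k}$) and your ladder-epoch/perpetuity argument for the tightness of $(W_{\tau_m})_m$ are sound. They give a more elementary route than the paper's, which writes $W_n=1+e^{-X_{-n+1}}W_{n-1}$ and invokes the Babillot--Bougerol--Elie recurrence theorem for the critical affine recursion. That theorem shows directly that $W_n$ enters a \emph{fixed} interval $(0,M)$ infinitely often a.s.

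There is a minor slip in your first step. The Geiger--Kersting bound is $g_k\le f_k''(1)/f_k'(1)^2$, not half of it. For $f(s)=s^2$ one has $g(0)=1/2>f''(1)/(2f'(1)^2)=1/4$, and your inequality would give $q_1\ge 4/3$. This only replaces $C/2$ by $C$.

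The genuine gap is the last step. Your justification is that modifying finitely many $X$'s multiplies $W_n$ by a bounded random factor. That only shows $\{\liminf_n W_n<\infty\}$ is a tail event, and hence has probability one. This yields $q_n>\e(\omega)$ i.o.\ with a \emph{random} $\e(\omega)$, whereas the proposition asks for a deterministic $\e$. The factor depends on the perturbation and is not uniformly bounded, so no fixed enlargement of $K$ absorbs it. Nothing in your argument makes $\{W_n\le K\ \text{i.o.}\}$, for fixed $K$, a tail or exchangeable event.

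The missing observation is that the factor actually tends to $1$. If the first $N$ steps of $T$ are altered, then for $n\ge N$ one has $W_n'=W_n+e^{-T_n}D$, where $D$ depends only on the first $N$ coordinates (old and new). Since $W_n\ge e^{\max_{j\le n}T_j-T_n}$,
\[
\Big|\frac{W_n'}{W_n}-1\Big|\le |D|\,e^{-\max_{j\le n}T_j}\longrightarrow 0 ,
\]
because $T$ oscillates. Hence $\liminf_n W_n$ is a.s.\ equal to a tail random variable, and by Kolmogorov's 0--1 law it is a.s.\ a constant $\ell$. Your bound $\p[\liminf_n W_n\le K]\ge 1/2$ forces $\ell\le K$, so $W_n\le K+1$ i.o.\ a.s., and any $\e<1/(\max(1,C)(K+1))$ works.

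For its use in Lemma~\ref{lem:Bn_io}, which conditions on the environment, a random $\e$ would in fact suffice. The proposition as stated, however, asks for a deterministic one.
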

\begin{proof}
We note that $q_n$, $S_k^{(n)}$ and $W_n$ are re-indexed versions of $q_n$, $S_k$ and $W_n$ in Appendix A with the index running from $-n$ rather than $0$.  Hence, by Proposition \ref {pr:surv} and the assumption \emph{(H2)} of a uniform bound across an environment (which are i.i.d.\ across $k\in \mathbb Z_+$) we have
\begin{equation*}
	q_n^{-1}\le C W_n.
\end{equation*}
Now $S_k^{(n)}$ is a random walk counting forwards from $-n$ to 0 so we can redefine the walk, without altering $W_n$, counting backwards from 0 to $-n$ so that $S_0^{(n)}:=0$ and for $k\ge 1$
\begin{equation*}
	S_k^{(n)}=\sum_{j=0}^{k-1}X_{-(n-1+j)}\qquad k=0,\dots,n.
\end{equation*}
Then
\begin{equation*}
	S_k^{(n)}=X_{-(n-1)}+\sum_{j=1}^{k-1}X_{-(n-1+j)}=X_{-(n-1)}+S_{k-1}^{(n-1)}
\end{equation*}
and
\begin{align*}
	W_n&=1+\sum_{k=1}^{n}\exp(-S_k^{(n)})=1+\sum_{k=1}^{n}\exp(-X_{-(n-1)}-S_{k-1}^{(n-1)})\\
	&=1+\exp(-X_{-(n-1)})W_{n-1}.
\end{align*}
This is the setting of Babillot, Bougerol and Ellie in \cite{BBE97} in one dimension with $A_n=\exp(-X_{-(n-1)})$ and $B_n=1$.  We check the hypotheses (H) of this paper
\begin{enumerate}
	\item The affine map $x\mapsto A_1 x + B_1$ is not almost surely the identity.  Since $B_1=1$, this holds trivially.
	\item For some $\delta>0$ we require $\x[(|\log(A_1)|+\log^+||B_1||)^{2+\delta}]<\infty$ so that $\x[|X_0|^{2+\delta}]<\infty$ which is the case by assumption.
	\item $\x[\log A_1]=\x[-X_0]=0$ and $A_1\not\equiv 1$.  The first follows from criticality, and the second from $\p[X_0=0]<1$.
\end{enumerate}
Hence, by Corollary 4.2 of \cite{BBE97}, $W_n$ visits any open set of positive $m$-measure i.o.\ where $m$ is an unbounded, invariant Radon measure, that is if $U$ is open with $m(U)>0$ then $\p[W_n\in U\;\text{i.o.}]=1$.

We next show that there exists a bounded $U$ with positive Radon measure.  Since $\x[\mu_0]=1$ and $\p[\mu_0=1]<1$, it follows that $\p[\mu_0>1]>0$ and $\p[\mu_0<1]>0$. Hence, $\p[\sfrac{1}{\mu_0}<1]>0$.  Recall that $W_n$ satisfies the affine recursion
\begin{equation*}
	W_n=1+A_nW_{n-1},\qquad\text{where}\qquad A_n=\exp(-X_{-(n-1)})=\frac{1}{\mu_{-(n-1)}}.
\end{equation*}
Since $\mu_{-(n-1)}$ is identical in distribution to $\mu_0$ it follows $\mathbb P[A_n<1]>0$.  Choose $a<1$ such that $\mathbb P[A_n\le a]>0$.  If for $k$ consecutive steps we have $A_{t+1},\dots,A_{t+k}\le a$, then iterating the recursion gives
\begin{equation*}
	W_{t+k}\le 1+a+\dots+a^k+a^kW_t = \frac{1-a^{k+1}}{1-a}+a^kW_t.
\end{equation*}
Since $a<1$, the right-hand side converges to $\frac{1}{1-a}$ as $k\to\infty$.  Let $M=\frac{1}{1-a}+1$ then, for each $W_t$ there exists $K_t\in\mathbb N$ such that
\begin{equation*}
	\frac{1-a^{K_t+1}}{1-a}+a^{K_t}W_t<M.
\end{equation*}
Hence, on the event $\{A_{t+1}\le a,\dots,A_{t+K_t}\le a\}$, we have $W_{t+K_t}\in(0,M)$.  Since the $A_n$ are i.i.d.\ and $\p[A_1\le a]>0$,
\begin{equation*}
	\p[A_{t+1}\le a,\dots,A_{t+K_t}\le a]=\p[A_1\le a]^{K_t}>0.
\end{equation*}
Hence, starting from any state, there is positive probability of entering $(0,M)$ after finitely many steps, so $(0,M)$ is accessible.  Therefore $(0,M)$ is a nonempty open accessible set, and so the invariant Radon measure $m$ given by Corollary 4.2 of \cite{BBE97} satisfies $m((0,M))>0$.  Let $\e=\sfrac{1}{CM}$. Then, recalling that $q_n^{-1}\le C W_n$
\begin{equation*}
	\p[W_n\in(0,M)\;\text{i.o.}]=1 \implies \p[q_n^{-1}<CM\;\text{i.o.}]=1 \implies \p[q_n>\e\;\text{i.o.}]=1.
\end{equation*}
\end{proof}

We next bridge this proposition in a lemma that gives the survival of infinitely many immigrant lineages in a BPREI and follow this with a lemma to show irreducibility and recurrence of the BPREI.
\begin{lemma}[Immigrants descend infinitely often]\label{lem:Bn_io}
Suppose the one-sided BPREI $(Z^{(0)}_k)_{k\ge 0}$ admits a stationary distribution $\pi$. Then it admits a stationary two-sided version
$(Z_n)_{n\in\mathbb Z}$.  For $n\ge1$ let $B_n$ be the event in the two-sided BPREI that there exists an immigrant at time $-n$ with at least one descendant alive at time $0$. Then under hypotheses (H)
\begin{equation*}
	\p[B_n\ \text{i.o.}]=1.
\end{equation*}
\end{lemma}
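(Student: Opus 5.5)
The plan is to work conditionally on the environment and use a conditional Borel--Cantelli argument. Proposition~\ref{pr:qnio} supplies infinitely many $n$ for which a single lineage started at $-n$ survives to time $0$ with probability at least $\e$. At each such $n$, what remains is the chance that at least one immigrant actually arrives at $-n$.

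First, the two-sided stationary version comes from the Remark: stationarity of $\pi$ and Kolmogorov extension give $(Z_n)_{n\in\mathbb Z}$ satisfying \eqref{eq:BP2}. We realise it on a common space with the i.i.d.\ environment $(\mathcal E_k)_{k\in\mathbb Z}$, the immigration $(M_k)$ and the offspring variables $(\xi_{k,i})$. Label the immigrants arriving at time $-n$ and follow their descendants using the offspring variables at times $-n,\dots,-1$ (the indexing of \eqref{eq:qn1}). Conditional on the environment $\mathcal E=(\mathcal E_k)_{k\in\mathbb Z}$, the following hold:
\begin{itemize}
\item $M_{-n}$ is independent of the reproduction after time $-n$;
\item each of the $M_{-n}$ immigrant lineages survives to time $0$ independently with probability $q_n$, up to the re-indexing of \eqref{eq:qn1}.
\end{itemize}
Hence
\begin{equation*}
\p[B_n\mid \mathcal E]=1-\x\big[(1-q_n)^{M_{-n}}\mid\mathcal E\big]\ \ge\ q_n\,\p[M_{-n}\ge1\mid \mathcal E_{-n}].
\end{equation*}

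The main obstacle is that the events $B_n$ are not independent given $\mathcal E$: lineages from different immigrants share the offspring variables at common times. The union in $B_n$ is also over immigrants of a single generation, so disjoint blocks do not separate them either. To handle this, replace $B_n$ by a sub-event that is cleaner to analyse. Use a conditional second Borel--Cantelli lemma (L\'evy's extension) along the filtration $\mathcal G_n$ generated by $\mathcal E$ together with all immigration and offspring variables at times $\le -n$ and $\ge$ some level. Alternatively, and more simply, use the following event. Conditional on $\mathcal E$, let $C_n$ be the event that $M_{-n}\ge1$ and that the lineage of the first immigrant at $-n$ survives to $0$. This lineage uses offspring variables indexed by the individuals it contains, which can be labelled so that distinct immigrants use disjoint families of $\xi$'s, by the usual Ulam--Harris construction. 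Under that construction the events $C_n$, $n\ge1$, are conditionally independent given $\mathcal E$, because the underlying families of random variables are disjoint.

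With conditional independence in hand, apply Borel--Cantelli conditionally. We have
\begin{equation*}
\sum_n \p[C_n\mid\mathcal E]\ \ge\ \sum_n \e\,\ind\{q_n>\e\}\,\p[M_{-n}\ge1\mid\mathcal E_{-n}].
\end{equation*}
The issue is that $q_n$ depends on $\mathcal E_{-n+1},\dots,\mathcal E_0$ while $\p[M_{-n}\ge1\mid\mathcal E_{-n}]$ depends on $\mathcal E_{-n}$. To see that this sum diverges almost surely, I would redo the argument of Proposition~\ref{pr:qnio} for the Markov chain $W_n$ jointly with the independent i.i.d.\ marks $\mathcal E_{-n}$. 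By (H3), $\p[M_0\ge1]>0$, so there is $\eta>0$ with $\p\big[\p[M_0\ge1\mid\mathcal E_0]>\eta\big]>0$.
\begin{itemize}
\item Since $W_{n}$ depends on $\mathcal E_{-n+1},\dots,\mathcal E_0$ and $\mathcal E_{-n}$ is independent of it, the times $n$ at which $W_n\in(0,M)$ form a sequence of stopping times for the backward filtration.
\item At each of these times, the mark exceeds $\eta$ independently with a fixed positive probability.
\item So infinitely many marks succeed, by L\'evy's conditional Borel--Cantelli.
\end{itemize}
The divergent sum then gives $\p[C_n\text{ i.o.}\mid\mathcal E]=1$ almost surely, hence $\p[B_n\text{ i.o.}]\ge\p[C_n\text{ i.o.}]=1$. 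I expect the independence bookkeeping to be the delicate point. The first thing I would try is the Ulam--Harris labelling trick, checking that this labelling is compatible with the stationary two-sided construction. That compatibility holds because, conditionally on $\mathcal E$, the descendants of immigrants after $-n$ are determined by fresh variables independent of $Z_{-n}$.
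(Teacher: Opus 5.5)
Your proposal is correct and has the same skeleton as the paper's proof. You condition on the environment and keep a single immigrant per generation, so the retained lineages use disjoint offspring variables and are conditionally independent. You then invoke Proposition~\ref{pr:qnio} for infinitely many $n$ with $q_n>\e$, and apply the second Borel--Cantelli lemma given $\mathcal E$. (The full events $B_n$ are in fact already conditionally independent given $\mathcal E$ by the branching property, so the obstacle you flag is milder than you suggest, but passing to $C_n$ is harmless.)

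The genuine difference is how you handle the arrival of an immigrant. The paper asserts $\p[B_n\mid\mathcal E]\ge p\,q_n$ with the unconditional $p=\p[M_0\ge1]$. That is justified only when the immigration law does not depend on the environment, as in the RRG, where immigration is Bernoulli($\beta$) in every environment. In general $\p[M_{-n}\ge1\mid\mathcal E]=\p[M_{-n}\ge1\mid\mathcal E_{-n}]$ is random, and (H3) even allows it to vanish on a set of environments of positive probability. Your extra step handles exactly this case: you fix $\eta$ with $\p\bigl[\p[M_0\ge1\mid\mathcal E_0]>\eta\bigr]>0$, then use L\'evy's Borel--Cantelli lemma along the backward filtration to show that $\{W_n<M\}$ and the mark event for $\mathcal E_{-n}$ occur together infinitely often. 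So your argument proves the lemma in the generality stated, whereas the paper's only covers environment-independent immigration.

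One caveat concerns indexing. The independence of the mark $\mathcal E_{-n}$ from $W_n$ relies on the convention in \eqref{eq:qn1}. Under \eqref{eq:BP1} with $X_i=\log f'_{i-1}(1)$, the immigrant arriving at $-n$ first reproduces under $\mathcal E_{-n}$, and $W_n$ contains $X_{-n+1}=\log f'_{-n}(1)$. So if $M_{-n}$ is also governed by $\mathcal E_{-n}$, the mark and $W_n$ are not independent. You can fix this by peeling off one step:
\begin{itemize}
\item write $W_n=1+A_nW_{n-1}$;
\item use the events $\{W_{n-1}<M\}$, which depend only on strictly later environments and still occur infinitely often;
\item pair them with the mark event $\{A_n\le K,\ \p[M_{-n}\ge1\mid\mathcal E_{-n}]>\eta\}$, which has positive probability for suitable $K,\eta$.
\end{itemize}
On the intersection, $W_n\le 1+KM$, hence $q_n\ge (C(1+KM))^{-1}$, and the rest of your argument goes through unchanged.
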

\begin{proof}
Let $E$ be the set of environments where $q_n>\e$ for infinitely many $n$, that is $E=\{\mathcal E:\ q_n(\mathcal E)>\e \text{ for infinitely many }n\}$ then $\p[E]=1$.  Let $\mathcal E\in E$ and
\begin{equation*}
	\mathcal N=\{n\ge1 : q_n>\e\},
\end{equation*}
which is infinite for $\mathcal E$.  For each $n\in\mathcal N$, on the event $\{M_{-n}\ge1\}$ select one immigrant at time $-n$
and follow only its descendants. Conditional on the environment, this lineage evolves as a one-ancestor BPRE and survives to time $0$ with probability $q_n$. Let $p=\p[M_0\ge1]$, then as $p>0$ and the immigration random variables are independent,
\begin{equation*}
	\p[B_n \mid \mathcal E] \ge p\,q_n \ge p\e \qquad \text{for all } n\in\mathcal N.
\end{equation*}
Hence,
\begin{equation*}
	\sum_{n\in\mathcal N} \p[B_n\mid\mathcal E]=\infty.
\end{equation*}
Conditional on $\mathcal E$, the immigration variables $(M_k)$ are independent across times and, within each generation, the offspring variables $(\xi_{k,i})_{i\ge1}$ are independent across $i$.  Since for each $n$ we follow only one selected immigrant at time $-n$, distinct immigrant lineages use disjoint collections of offspring variables. Hence the events $B_n$ are independent given $\mathcal E$, and by the conditional Borel--Cantelli lemma,
\begin{equation*}
	\p[B_n\ \text{i.o.}\mid\mathcal E]=1.
\end{equation*}
Taking expectations
\begin{align*}
	\p[B_n\ \text{i.o.}]&=\x[\ind_{\{B_n\;\text{i.o.}\}}]=\x[\x[\ind_{\{B_n\;\text{i.o.}\}}\mid \mathcal E]]\\
	&=\x[\p[B_n\ \text{i.o.}\mid \mathcal E]]=\x[1]=1.
\end{align*}
\end{proof}
\begin{lemma}[Irreducibility and recurrence of the BPREI]\label{lem:irred-rec}
Assume hypotheses (H), then the one-sided BPREI population process $(Z_k^{(0)})_{k\ge0}$ is a recurrent, irreducible Markov chain.
\end{lemma}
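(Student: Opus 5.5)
We prove Lemma~\ref{lem:irred-rec} in two parts. First we show that the chain is irreducible on the state space of values that can occur, which is all of $\mathbb N_0$ or at least the communicating class containing $0$. Then we show that state $0$ is recurrent. Recurrence then passes to the whole class.

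\textbf{Irreducibility.} We show that $0$ is reachable from every state $k$ and that every reachable state $j$ is reachable from $0$.

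From state $k$, one step goes to $0$ with positive probability, because every one of the $k$ individuals can have no offspring and no immigrant arrives. By (H2) and (H3), and by conditional independence given $\mathcal E_0$, this probability is
\begin{equation*}
\x\big[f_0(0)^k\,\p[M_1=0\mid\mathcal E_1]\big]=\x[f_0(0)^k]\,\p[M_0=0]>0.
\end{equation*}
The first factor is positive because $\p[\xi_{0,1}=0]>0$ forces $\p[f_0(0)>0]>0$.

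From $0$ the chain moves to state $m$ in one step with probability $\p[M_0=m]$. Positivity of $\p[M_0\ge1]$ gives some $m\ge1$ that is reached. From there we need the chain to climb to larger values. Two mechanisms do this. Repeated immigration of the values $m$ charged by $M_0$, combined with surviving offspring, works once $\p[\xi\ge1]>0$. Supercritical environments with $\p[\mu_0>1]>0$ are also available; by (H1) such environments give $\p[\xi_{0,1}\ge2]>0$.

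So we will restrict to the class $\mathcal C$ of states accessible from $0$. Every state in $\mathcal C$ leads back to $0$, so $\mathcal C$ is a closed irreducible class containing the chain started at $0$. If $\mathcal C=\mathbb N_0$ we get irreducibility on the full space; otherwise we note that states outside $\mathcal C$ are transient and are left forever after the first visit to $0$. I expect this bookkeeping, namely identifying the state space on which the chain is irreducible, to be routine but fiddly.

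\textbf{Recurrence.} We must show that $0$ is visited infinitely often almost surely. Here we invoke Bauernschubert \cite{BS14}. Her criterion for the critical BPREI shows recurrence under $\x[(\log^+ M_0)^{2+\delta}]<\infty$, which is well inside her recurrence threshold; the threshold concerns the tail of $\log M_0$, roughly $\p[\log M_0>x]\sim c/x$. We check her standing assumptions against (H). Criticality and non-degeneracy come from (H1). Her second-moment and uniform-bound conditions come from $f''(1)/f'(1)^2<C$ in (H2), which is exactly the Geiger--Kersting-type condition. Her $\p[\xi=0]>0$ and $\p[M=0]>0$ conditions come from (H2) and (H3). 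Her moment condition on $\log\mu$ is implied by $\x|\log f'(1)|^{2+\delta}<\infty$.

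The main obstacle is matching hypotheses precisely. If her result is stated only for the event of returning to $0$ with probability one from $0$, that already gives recurrence of $0$, and hence of $\mathcal C$. If the hypotheses do not match directly, the fallback is a direct argument. The embedded BPRE starting from $Z$ dies out in the critical regime, since $q_n\to0$ almost surely by the Geiger--Kersting estimates and $W_n\to\infty$. The immigrants arriving in the interim each produce lineages of subexponential size; this uses the $(\log^+M)^{2+\delta}$ moment. One then shows that with conditional probability bounded below along a subsequence, all current lineages are extinct and no immigrant arrives. A Lévy conditional Borel--Cantelli argument over these stopping times then gives infinitely many returns to $0$.
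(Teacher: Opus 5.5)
Your proposal follows the same route as the paper. Both arguments use a one-step passage to $0$ from any state via $\p[\xi_{0,1}=0]>0$ and $\p[M_0=0]>0$, and accessibility of a positive state from $0$ via $\p[M_0\ge1]>0$. Both take recurrence from Bauernschubert's criterion for critical BPREI, which the paper invokes as her Theorem~3 after noting criticality, finite variance of $\log\mu_0$ and the $(2+\delta)$-moment of $\log^+M_0$.

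The one real difference is the irreducibility bookkeeping, and there your version is the better one. The paper asserts that every state of $\mathbb N_0$ is reachable from $0$ because the laws are non-degenerate, but this can fail under (H). For example, take offspring and immigration both supported on $\{0,2\}$, with $\p[\xi_{0,1}=2\mid\mathcal E_0]\in\{1/3,3/4\}$ equally likely and $M_0\in\{0,2\}$. This satisfies (H1)--(H3), yet the chain started at $0$ stays on the even integers. Working with the closed class $\mathcal C$ of states accessible from $0$, all other states being inessential, is therefore the correct formulation. It is also all that Theorem~\ref{th:null} needs: uniqueness of $\pi$ and the contradiction argument only require a single closed class.

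Your fallback direct argument is unnecessary given the citation, and as written it would not stand on its own. The step giving a uniform lower bound on the conditional probability that all lineages are simultaneously extinct is exactly where the immigration tail must be used quantitatively. Sufficiently heavy immigration tails make the critical BPREI transient, so knowing only that lineages have subexponential size does not deliver that bound.
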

\begin{proof}
\emph{Irreducibility}
By (H2), $\mathbb P[\xi_{0,1}=0]>0$, so from any state $k\ge0$ there is positive probability that all $k$ individuals produce no
offspring in the next generation. By (H3), $\mathbb P[M_1=0]>0$.  Hence from any state $k$ there is positive probability to move to state $0$ in one step.  Conversely, by (H3), $\mathbb P[M_1\ge1]>0$, so from state $0$ there is positive probability to move to a positive state.  Since reproduction and immigration both have non-degenerate distributions, every state in $\mathbb N_0$ can be reached from 0 in finitely many steps with positive probability and the chain is irreducible.

\emph{Recurrence}
Under (H) the environment is critical with finite variance of the associated random walk increments and a $(2+\delta)$–moment condition on immigration. These are exactly the hypotheses of Theorem 3 in Bauernschubert \cite{BS14}, which states that the critical BPREI is recurrent.
\end{proof}
We are now in a position to prove our main result.
\begin{theorem}[Null recurrence]\label{th:null}
The critical BPREI process $(Z^{(0)}_{k})_{k\ge 0}$ under hypotheses (H) is null recurrent.
\end{theorem}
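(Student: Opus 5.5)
By Lemma~\ref{lem:irred-rec} the chain $(Z^{(0)}_k)_{k\ge0}$ is irreducible and recurrent. It therefore remains to exclude positive recurrence, and I would argue by contradiction. Suppose the chain is positive recurrent, with stationary distribution $\pi$. By the Remark there is a stationary two-sided version $(Z_n)_{n\in\mathbb Z}$ with $Z_0\sim\pi$, so $Z_0<\infty$ almost surely. Lemma~\ref{lem:Bn_io} now applies, and $\p[B_n\ \text{i.o.}]=1$.

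The key step is to read off $Z_0$ as the total number of individuals alive at time $0$. Every such individual descends from exactly one immigrant arriving at some time $-n\le 0$. The two-sided process is the limit $\lim_{n\to\infty}Z^{(-n)}_{\cdot}$ started from the remote past, so there is no ``initial'' population at $-\infty$. Hence
\begin{equation*}
Z_0\;\ge\;\#\{n\ge1:\ B_n \text{ occurs}\},
\end{equation*}
because on each event $B_n$ at least one individual alive at time $0$ descends from an immigrant at time $-n$. Distinct times $-n$ give distinct individuals at time $0$: a lineage has a unique immigrant ancestor. Since $B_n$ occurs for infinitely many $n$ almost surely, the right-hand side is infinite almost surely. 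This contradicts $Z_0<\infty$, so no stationary distribution exists and the chain, being recurrent, is null recurrent.

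The main obstacle is justifying this decomposition of $Z_0$ into immigrant lineages for a stationary two-sided version. That version is built by Kolmogorov extension from shift invariance, and nothing so far forces it to coincide with the limit of processes started empty at time $-n$. It might in principle carry mass ``coming from $-\infty$'' that is not attributable to any immigrant.

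To handle this, I would couple $Z_0$ with $Z^{(-n)}_0$ started from $Z^{(-n)}_{-n}=0$ under the same environment, offspring and immigration variables. Comparing the two-sided process with one started from zero at time $-n$ gives $Z_0\ge Z^{(-n)}_0$ by monotonicity: more initial individuals, with the same immigrant lineages, yield at least as many descendants. It also gives $Z^{(-n)}_0\ge\#\{1\le k\le n: B_k\}$. Letting $n\to\infty$ then yields the contradiction directly.

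A second subtlety is that Lemma~\ref{lem:Bn_io} is stated for the two-sided process. Its proof uses only the environment and the immigrant lineages, so it transfers verbatim to this coupling. In fact the same argument shows $Z^{(-n)}_0\to\infty$ almost surely, and hence $\p[Z^{(0)}_n\le K]\to 0$ for every fixed $K$ by time reversal of the i.i.d.\ environment. This also rules out a stationary law directly.
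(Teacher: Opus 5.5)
Your argument is correct and is essentially the paper's. Both assume positive recurrence, pass to the stationary two-sided version, and contradict $Z_0<\infty$ (the paper phrases this via tightness of $\pi$) using $Z_0\ge\sum_{m\le n}\ind_{B_m}$ together with Lemma~\ref{lem:Bn_io}. The ``main obstacle'' you raise is not one: the inequality only needs the families of immigrants at distinct times to be disjoint, and any mass coming from $-\infty$ only adds to $Z_0$, so your coupling with $Z^{(-n)}$ is a harmless but unnecessary detour.
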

\begin{proof}
We know that the process is recurrent and irreducible from Lemma \ref{lem:irred-rec} so it remains to show that it is null.

Assume that $(Z_k^{(0)})_{k\ge 0}$ is positive recurrent.  Then, as it is irreducible it admits a unique stationary probability distribution $\pi$ on $\mathbb N_0$. Let $(Z_k)_{k\in \mathbb Z}$ be the two sided version of the BPREI.  Since $\pi$ is stationary, we have
\begin{equation*}
	Z_k \sim \pi\qquad \text{ for all }k\in \mathbb Z.
\end{equation*}
Hence, for every $\e>0$ there exists $M\in\mathbb N$ such that
\begin{equation}
	\p[Z_0 \ge M]=\pi(\{M,M+1,\dots\}) < \e.\label{eq:null1}
\end{equation}
Let $B_m$ be the event that there exists an immigrant at time $-m$ which has at least one descendant alive at time $0$.
For each such $m$, define
\begin{equation*}
	J_m := \#\{\text{immigrants at time }-m \text{ whose lineage survives to time }0\}.
\end{equation*}
then $B_m=\{J_m\ge 1\}$. As distinct immigration times give disjoint families at time $0$ we have
\begin{equation}
	Z_0 \;\ge\; \sum_{m=1}^n J_m \;\ge\; \sum_{m=1}^n \ind_{B_m}.\label{eq:null2}
\end{equation}
By Lemma \ref{lem:Bn_io}, the events $B_m$ occur infinitely often almost surely and
\begin{equation*}
	\sum_{m=1}^\infty \ind_{B_m} =\infty \;\text{a.s.}
\end{equation*}
In particular, for any fixed $M$,
\begin{equation*}
	\ind_{\{\sum_{m=1}^n \ind_{B_m} \ge M\}} \uparrow 1 \qquad \text{a.s. as } n\to\infty.
\end{equation*}
By monotone convergence and \eqref{eq:null2}
\begin{equation*}
	\p[Z_0 \ge M] \geq\p\bigg[\sum_{m=1}^n \ind_{B_m} \ge M\bigg] \rightarrow 1\qquad \text{ as }n\ti,
\end{equation*}
which contradicts \eqref{eq:null1}.  Hence $(Z_k^{(0)})_{k\ge 0}$ is not positive recurrent and, being recurrent and irreducible, must be null recurrent.
\end{proof}

\section{Null recurrence of the critical RRG}
Jordan introduced the randomised reproducing graph model (`RRG') in \cite{JO11}.  This iterative model for the sequence of graphs $(G_{n})_{n\geq 0}$ depends on three parameters $\alpha$, $\beta$, $\gamma$, starts with a simple graph $G_{0}$, and forms a new graph $G_{n+1}$ from $G_{n}$ by adding a child vertex for every vertex of $G_{n}$ and edges according to some stochastic rules governed by the parameters.  Significantly all edges in $G_{n}$ are retained into $G_{n+1}$.  In the model in this paper we retain the rules for vertex and edge creation but allow for edge deletion through a fourth parameter $\delta$.

To describe the model we refer (with a slight abuse of notation) to parent vertices in $G_{n}$ and their continuation into $G_{n+1}$ as $u$ and $v$ and child vertices as $u'$ and $v'$ respectively.  The edges of $G_{n+1}$ are obtained according to the following mechanism.  For each $n$ define independent (of each other and of the random variables at other stages of the construction) Bernoulli random variables $a^{(n)}_{\{ u,v \} }\sim Ber(\alpha)$ for each unordered pair $\{ u,v\}$ of vertices of $G_{n}$, $b_{u}^{(n)}\sim Ber(\beta)$ for each vertex in $G_{n}$, $c^{(n)}_{(u,v)}\sim Ber(\gamma)$ for each ordered pair $(u,v)$ of vertices of $G_{n}$ and $d^{(n)}_{\{ u,v \} }\sim Ber(\delta)$ for each unordered pair $\{ u,v\}$ of vertices of $G_{n}$.  We then connect and delete vertices as follows:
\begin{itemize}
    \item $u'$ is connected to $v'$ in $G_{n+1}$ if and only if  $a^{(n)}_{\{ u,v \} }=1$ and $u$ and $v$ are connected in $G_{n}$, so each child is connected to each of its parent's  neighbours' children with probability $\alpha$;
    \item $u$ is connected to $u'$ in $G_{n+1}$ if and only if $b_{u}^{(n)}=1$, so each child is connected to its parent with probability $\beta$;
    \item $u$ is connected to $v'$ in $G_{n+1}$ if and only if  $c^{(n)}_{(u,v)}=1$ and $u$ and $v$ are connected in $G_{n}$, so each child is connected to each of its parent's neighbours with probability $\gamma$; and
    \item having added the $\alpha$, $\beta$ and $\gamma$ edges we delete the edge $\{u,v\}$ precisely when $d^{(n)}_{\{ u,v \} }=0$.
\end{itemize}
It follows that the extended model is equivalent to the original model when $\delta=1$.

We apply Theorem \ref{th:null} to the example of the randomised reproducing graph.  Let $D_{n}$ be the degree of a vertex chosen at random from $V(G_{n})$.  Then we can define $D_{n}$ as a discrete time Markov chain iteratively.  If $\beta<1$ then the state space is $\mathbb{Z}_{\geq 0}$ and if $\beta=1$ the state space is $\mathbb{N}$.  When iterating from $G_n$ to $G_{n+1}$, write $v_{0n}\in V(G_{n+1})$ for the parent, or continuation, vertex $v_n\in V(G_n)$ and $v_{1n}\in V(G_{n+1})$ for the child vertex of $v_n\in V(G_n)$.  Let $v_{\emptyset}$ be chosen uniformly at random from $G_{0}$ and for $n\geq 1$ let
\begin{align*}
	v_{n+1}=\begin{cases}
		v_{0n}& \text{with probability }1/2\text{, a continuation vertex}\\
		v_{1n}& \text{with probability }1/2\text{, a child vertex}
	\end{cases}
\end{align*}
then,
\begin{align}
	D_{n+1}=(1-\zeta_{n+1})W_{n+1}+X_{n+1}+Y_{n+1}+\zeta_{n+1} Z_{n+1},\label{eq:Dn}.
\end{align}
Where $W_{n+1}\sim Bin(D_{n},\alpha)$, $X_{n+1}\sim Ber(\beta)$, $Y_{n+1}\sim Bin(D_{n},\gamma)$, $Z_{n+1}\sim Bin(D_{n},\delta)$ and $\zeta_{n+1}\sim Ber(1/2)$.

It follows that, provided $\beta>0$, we can consider $(D_n)_{n\geq 0}$ as a BPREI with the environment representing whether the chain follows a continuation or child vertex, each with probability $1/2$.  The immigration is represented by $\beta$-edges and the $X_n$ Bernoulli random variable.  If the environment is continuation then the offspring are determined by a sum of $\delta$ and $\gamma$-edges that arise from the Bernoulli random variables summed in the $Z_n$ and $Y_n$ random variables.  If the environment is child then the offspring are determined by a sum of $\alpha$ and $\gamma$-edges that arise from the Bernoulli random variables summed in the $W_n$ and $Y_n$ random variables.  We are interested in the critical case for the RRG which is $(\alpha+\gamma)(\delta+\gamma)=1$.
\begin{center}
\begin{tabular}{lll}
\toprule
\textbf{Parameter regime} & \textbf{Type of process $(D_n)$} & \textbf{Behaviour} \\
\midrule
$\beta=0$, $\alpha\neq\delta$ 
& Critical BPRE 
& $D_n \to 0$ a.s. \\

$\beta=0$, $\alpha=\delta$ 
& Critical Galton--Watson process 
& $D_n \to 0$ a.s. \\

$\beta>0$, $\alpha\neq\delta$ 
& Critical BPREI 
& Null recurrent (developed below) \\

$\beta>0$, $\alpha=\delta$ 
& Critical BPI 
& Null recurrent (developed below) \\
\bottomrule
\end{tabular}
\end{center}
The environments, immigration random variables and offspring random variables are all independent and identically distributed.  The offspring p.g.f.\ is given by
\begin{equation*}f_n(s)=\begin{cases}
		(1-\gamma)(1-\delta)+s(\gamma(1-\delta)+\delta(1-\gamma))+s^2\delta\gamma& \text{with probability }\frac{1}{2}\\
		(1-\gamma)(1-\alpha)+s(\gamma(1-\alpha)+\alpha(1-\gamma))+s^2\alpha\gamma& \text{with probability }\frac{1}{2}
	\end{cases}\end{equation*}
so that
\begin{equation*}\mu_1=f'(1)=\begin{cases}
		\delta+\gamma& \text{with probability }\frac{1}{2}\\
		\alpha+\gamma& \text{with probability }\frac{1}{2}.
	\end{cases}\end{equation*}
Therefore, $\x[\log\mu_1]=\log[((\alpha+\gamma)(\delta+\gamma))^{\frac{1}{2}}]=0$.  Also $\p[\mu_1=1]=1$ requires $\delta+\gamma=\alpha+\gamma=1 \iff \delta=\alpha$ which is not the case so $\p[\mu_1=1]<1$.  All moments of the offspring random variables are finite and
\begin{equation*}\frac{f''(1)}{(f'(1))^2}=\begin{cases}
		\sfrac{2\delta\gamma}{(\delta+\gamma)^2}& \text{with probability }\frac{1}{2}\\
		\sfrac{2\alpha\gamma}{(\alpha+\gamma)^2}& \text{with probability }\frac{1}{2}
	\end{cases}\;\leq 1.\end{equation*}
Finally, the immigration assumptions hold so that all hypotheses (H) hold and we can conclude the following theorem.
\begin{theorem}[Critical RRG has no limiting degree distribution] \label{th:RRG}
In the critical randomised reproducing graph with $\beta>0$ and $\alpha\neq\delta$, the degree of a randomly chosen vertex $D_n$ is null recurrent, $\p[D_n=d \;\text{i.o.}\;]=1$ for each fixed $d$ and $\limsup_{n\ti} D_n=\infty$. Moreover, the degree proportions $p_d^{(n)}$ in $G_n$ admit no limiting distribution.
\end{theorem}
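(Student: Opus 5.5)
The plan is to reduce everything to Theorem~\ref{th:null} applied to the Markov chain $(D_n)_{n\ge0}$ from \eqref{eq:Dn}, and then transfer the Markov-chain statement to the degree proportions. First I will confirm that $(D_n)$ really is a BPREI in the sense of \eqref{eq:BP1}. The environment $\mathcal E_n=\zeta_{n+1}$ is i.i.d.\ Bernoulli$(1/2)$. Given $\mathcal E_n$, each of the $D_n$ edges independently contributes $0$, $1$ or $2$ offspring with the p.g.f.\ $f_n$ displayed above: one $\gamma$-edge plus one $\alpha$- or $\delta$-edge. The immigration is $M_{n+1}=X_{n+1}\sim Ber(\beta)$, which is independent of the environment and of the offspring.

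Next I check the hypotheses (H). (H1) was checked in the text. For (H2), note that $\p[\xi_{0,1}=0]\ge \tfrac12(1-\gamma)(1-\delta)$, which is positive provided $\gamma<1$ and not both of $\alpha,\delta$ equal $1$; criticality with $\alpha\ne\delta$ forces this. The bound on $f''(1)/f'(1)^2$ was shown above. Finally, $\log\mu$ takes two finite values, so all of its moments are finite. For (H3) we need $0<\beta<1$, which gives $\p[M_0\ge1]>0$ and $\p[M_0=0]>0$, and $\log^+M_0=0$. The case $\beta=1$, where the state space is $\mathbb N$, needs care. There I would either shift to $D_n-1$, or observe that Lemma~\ref{lem:Bn_io} and the proof of Theorem~\ref{th:null} only use $\p[M_0\ge1]>0$, with irreducibility on $\mathbb N$ argued directly. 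Lemma~\ref{lem:irred-rec} and Theorem~\ref{th:null} then give that $(D_n)$ is irreducible and null recurrent.

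The pathwise consequences follow from standard Markov chain theory. Recurrence and irreducibility give $\p[D_n=d\ \text{i.o.}]=1$ for every state $d$. Applying this to every $d$ simultaneously (a countable intersection) gives $\limsup D_n=\infty$ almost surely. Null recurrence also gives $\p[D_n=d]\to0$ for each fixed $d$ (Orey's theorem, or the aperiodic case of the ratio-limit theorem). Aperiodicity holds because $0\to0$ has positive probability.

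The last claim is the main obstacle: the degree proportions are random quantities, whereas $D_n$ is the degree of a single uniformly sampled vertex. The sampling scheme picks $v_n$ uniformly from $V(G_n)$, because each vertex has exactly one continuation and one child and these are chosen with probability $1/2$ each. So $\p[D_n=d]=\x[p_d^{(n)}]$. Suppose $(p_d^{(n)})_d$ converged in distribution, or almost surely, to a limiting probability distribution $(p_d)_d$. By bounded convergence, $\x[p_d^{(n)}]\to\x[p_d]$ for each $d$. Since $\x[p_d^{(n)}]=\p[D_n=d]\to0$, we get $p_d=0$ almost surely for every $d$, so $\sum_d p_d=0\ne1$, a contradiction. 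Equivalently, the mass escapes to infinity, so any limit fails to be tight. I expect the delicate points to be stating precisely which mode of convergence of the random proportions is excluded, and justifying $\p[D_n=d]\to0$ from null recurrence, for which I would cite the standard result for aperiodic null-recurrent chains.
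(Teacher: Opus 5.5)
Your proposal is correct and follows essentially the paper's route: check (H) for the RRG, apply Theorem~\ref{th:null} together with irreducibility for the pathwise claims, and use $\p[D_n=d]=\x[p_d^{(n)}]$ to rule out a limit for the proportions. Your contradiction via $\p[D_n=d]\to0$ rests on the same fact as the paper's ``no invariant probability, so $\mathcal L(D_n)$ has no limiting distribution'', and your version also covers random limits.

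Your caution about $\beta=1$ is justified. There $\p[M_0=0]=0$, so (H3) fails, and the paper passes over this case when it asserts the immigration assumptions hold. One small fix: your (H2) bound should use both environments, $\p[\xi_{0,1}=0]=\tfrac12(1-\gamma)\bigl[(1-\delta)+(1-\alpha)\bigr]$, because $\delta=1$ is compatible with criticality and $\alpha\neq\delta$.
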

\begin{proof} By Theorem~\ref{th:null}, $D_n$ is a null recurrent Markov chain and therefore has no invariant probability distribution and as $D_n$ is irreducible the rest of the statements about it follow. Hence the laws $\mathcal L(D_n)$ do not converge. If $p_d^{(n)}$ converged in probability to some $p_d$, then $\p[D_n=d]=\x[p_d^{(n)}]\rightarrow p_d$, giving a limiting distribution for $D_n$ which is a contradiction. Therefore $p_d^{(n)}$ has no limiting distribution.
\end{proof}
In the case $\beta>0$ and $\alpha=\delta$, the environment is constant so that the process reduces to a BPI and we use some classic results in \cite{AN04}.

\begin{proposition}\label{pr:crit0}
In the critical randomised reproducing graph with $\beta>0$ and $\alpha=\delta$, $\p[D_n=d]\rightarrow 0$ and $p^{(n)}_d\rightarrow 0$ almost surely as $n\ti$ for all $d\in \mathbb{Z}_{\geq 0}$.
\end{proposition}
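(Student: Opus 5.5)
With $\alpha=\delta$ and criticality $(\alpha+\gamma)(\delta+\gamma)=1$ we get $(\alpha+\gamma)^2=1$, so $\alpha+\gamma=1$. Hence the two environments in \eqref{eq:Dn} give the same offspring law: each of the $D_n$ existing edges independently produces $\mathrm{Ber}(\alpha)+\mathrm{Ber}(\gamma)$ offspring, with mean $\alpha+\gamma=1$ and finite variance $\sigma^2=2\alpha\gamma$. The immigration is $X_{n+1}\sim\mathrm{Ber}(\beta)$, with mean $\beta$. So $(D_n)$ is a critical Galton--Watson process with immigration, and my plan is to apply the classical Foster--Williams asymptotics for critical BPI from \cite{AN04}. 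Two cases need separating. If $\sigma^2=0$, i.e.\ $\alpha\gamma=0$, then every edge has exactly one offspring and $D_{n+1}=D_n+X_{n+1}$; this deterministically increasing walk has $D_n\to\infty$ a.s.\ by the strong law. If $\sigma^2>0$, the theorem in \cite{AN04} gives $D_n/n\Rightarrow\Gamma$, a gamma law with shape $2\beta/\sigma^2$ and scale $\sigma^2/2$, which has no atom at $0$.

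\textbf{Law of $D_n$.} In both cases, for fixed $d$,
\[
\p[D_n=d]\le\p[D_n/n\le \e]\quad\text{for large }n,
\]
and the right side tends to $\p[\Gamma\le\e]$. This is small as $\e\downarrow0$ because $\Gamma$ has a continuous law on $(0,\infty)$. Hence $\p[D_n=d]\to0$.

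\textbf{Degree proportions.} Note first that $\x[p^{(n)}_d]=\p[D_n=d]\to0$, so $p^{(n)}_d\to0$ in $L^1$; the substance of the statement is the upgrade to almost sure convergence. Under the natural coupling, $p^{(n)}_d$ is the fraction of the $2^n|V(G_0)|$ vertices of $G_n$ whose degree is $d$, and the ancestral lines of different vertices are degree chains driven by largely independent Bernoulli variables. I would bound the second moment by
\[
\x[(p^{(n)}_d)^2]=\p[D_n=d,\;D'_n=d],
\]
where $D_n,D'_n$ are the degrees of two independently chosen vertices. Two uniform vertices of $G_n$ share their line of ancestry only up to a generation $T$ with geometric tail, and after $T$ their degrees evolve nearly independently. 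The obstacle is the interaction through shared edges: a vertex and its relatives are adjacent, so the two lines are not exactly independent. I would try to handle this by showing that $\p[D_n=d,\,D'_n=d]\le\p[D_n=d]^2+O(2^{-n/2})$ plus terms that are small when $T$ is large. Once $\sum_n\x[(p^{(n)}_d)^2]<\infty$ along the full sequence, Borel--Cantelli gives the almost sure convergence.

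If the rate from \cite{AN04} is too weak to make this sum converge, I would fall back on a different route. The route is to prove a deterministic bound: degrees along each ancestral line are nondecreasing apart from the binomial thinning, and a fixed degree $d$ is eventually left forever, uniformly across a large fraction of vertices. This would give the almost sure statement via a concentration (Azuma) bound on $p^{(n)}_d$ over the independent edge variables. I expect this concentration step, not the BPI asymptotics, to be the main difficulty.
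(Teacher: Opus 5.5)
Your argument for $\p[D_n=d]\to0$ is correct and is essentially the paper's. Both rest on the critical BPI asymptotics in \cite{AN04}: the paper cites Theorem~1 of VI.7 directly, while you go through the Gamma limit of Proposition~\ref{pr:critGamma} and the absence of an atom at $0$. Your separate treatment of $\alpha\gamma=0$ is an improvement on the paper. There $D_{n+1}=D_n+X_{n+1}$, and the normalisation $\gamma(1-\gamma)n$ degenerates. For $p^{(n)}_d$ the paper only invokes $\x[p^{(n)}_d]=\p[D_n=d]$ from the proof of Theorem~\ref{th:RRG}. As you rightly say, this gives convergence in $L^1$ and in probability, not almost surely. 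So the almost-sure upgrade is the real content of the second claim, and that is where your proposal has a gap.

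\textbf{The second-moment criterion fails.} The criterion $\sum_n\x[(p^{(n)}_d)^2]<\infty$ cannot hold on part of the parameter range, however well you control the interaction between lines. The reason is that $\x[(p^{(n)}_d)^2]\ge\p[D_n=d]^2$, and $\p[D_n=d]$ decays only polynomially:
\begin{itemize}
\item With $f_j$ the $j$-fold iterate of the offspring p.g.f., $1-f_j(0)\sim1/(\alpha\gamma j)$.
\item Hence $\p[D_n=0\mid D_0=0]=\prod_{j<n}\bigl(1-\beta(1-f_j(0))\bigr)\asymp n^{-\theta}$ with $\theta=\beta/(\alpha\gamma)$, and $\p[D_n=d]\ge c_d\,\p[D_{n-d}=0]$.
\item For example, $\alpha=\gamma=\delta=\tfrac12$, $\beta=\tfrac1{10}$ gives $\theta=\tfrac25$, and your series diverges.
\end{itemize}

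\textbf{The fallback fails in the same regime.} For $\theta\le1$ one has $\sum_n\p[D_n=0]=\infty$. So $(D_n)$, which is the degree along an ancestral line, is recurrent and never leaves $d$ for good. For $\theta>1$, by contrast, $\sum_n\x[p^{(n)}_d]<\infty$ already settles the almost-sure claim by the first-moment Borel--Cantelli argument.

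\textbf{How to repair it.} You need to centre. Prove $\sum_n\operatorname{Var}(p^{(n)}_d)<\infty$, then apply Chebyshev and Borel--Cantelli to $p^{(n)}_d-\p[D_n=d]$. Your decomposition by the split generation $T$ is the right tool. However, for early splits the smallness does not come from independence after $T$. The two lines share a random state and the $\beta$-edge created at the split, and possibly further edges for a few generations. The smallness comes instead from the fading of initial conditions in a critical BPI:
\begin{itemize}
\item Once the two lines are non-adjacent they stay non-adjacent and use disjoint Bernoulli variables.
\item Coupling the processes started from $k$ and $k'$ gives $|\p[D_m=d\mid D_0=k]-\p[D_m=d\mid D_0=k']|\le|k-k'|(1-f_m(0))=O(|k-k'|/m)$.
\item This makes the covariance for splits before $n/2$ of order $n^{-2}$, up to moment factors.
\item Splits after $n/2$ contribute only $O(2^{-n/2})$.
\end{itemize}
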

\begin{proof}
When $\alpha=\delta$ the environment is deterministic and the process $D_n$ reduces to a critical branching process with immigration.  This is an application of Theorem 1, Section VI.7 of \cite{AN04} noting that $f'(1)=1$ and $f''(1)<\infty$ where $f$ denotes the offspring p.g.f.\ of the associated BPI.  Applying this to $D_n$ gives $\lim_{n\ti}\p[D_{n}=d]=0$ and the result for $p^{(n)}_d$ follows from the argument used in Theorem \ref{th:RRG}.
\end{proof}

For $\alpha=\delta$ we can go further and prove that in the critical case a scaled degree of a vertex chosen at random converges to a specific Gamma distribution and do so in the following proposition.
\begin{proposition}\label{pr:critGamma}
In the critical randomised reproducing graph with $\beta>0$ and $\alpha=\delta$,
\begin{equation*}
	\frac{D_n}{\gamma(1-\gamma)n}\;\Rightarrow\;D
\end{equation*}
where $D\sim\Gamma\!\left(\frac{\beta}{\gamma(1-\gamma)},1\right)$.
Consequently,
\begin{equation*}
	p_d^{(n)}\to 0
\end{equation*}
for each fixed $d$.
\end{proposition}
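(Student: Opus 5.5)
Since $\alpha=\delta$, criticality $(\alpha+\gamma)(\delta+\gamma)=1$ forces $\alpha+\gamma=1$. The environment then plays no role: in both the child and continuation cases the offspring law is $\mathrm{Ber}(\alpha)+\mathrm{Ber}(\gamma)$, which is independent of the environment. So $(D_n)$ is an ordinary Galton--Watson process with immigration. Its offspring p.g.f.\ is $f(s)=(1-\alpha)(1-\gamma)+s(\alpha(1-\gamma)+\gamma(1-\alpha))+s^2\alpha\gamma$, with $f'(1)=1$ and $f''(1)=2\alpha\gamma=2\gamma(1-\gamma)$. Its immigration p.g.f.\ is $h(s)=1-\beta+\beta s$, with mean $\beta$.

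The plan is the classical Seneta--Pakes argument for critical BPI, specialised to these explicit quadratic p.g.f.s. Write $c:=f''(1)/2=\gamma(1-\gamma)$ and $f_k$ for the $k$-fold iterate of $f$. Iterating the recursion gives
\begin{equation*}
\x\big[s^{D_n}\big]=\x\big[f_n(s)^{D_0}\big]\prod_{k=0}^{n-1}h\big(f_k(s)\big).
\end{equation*}
Set $s=s_n=e^{-\lambda/(cn)}$ for fixed $\lambda>0$. The target is the Laplace transform limit $\x[e^{-\lambda D_n/(cn)}]\to(1+\lambda)^{-\beta/c}$, which is the Laplace transform of $\Gamma(\beta/c,1)$. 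The continuity theorem for Laplace transforms then gives the stated weak convergence.

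\textbf{Step 1.} I will use the standard critical estimate
\begin{equation*}
\frac{1}{1-f_k(s)}=\frac{1}{1-s}+ck+o(k)+O(1),
\end{equation*}
with the error uniform in $s\in[0,1)$. For a quadratic $f$ this can be checked directly: Taylor expansion gives
\begin{equation*}
\frac{1}{1-f(t)}-\frac{1}{1-t}=c+O(1-t)
\end{equation*}
uniformly on $[0,1)$, and telescoping the sum along $t=f_j(s)$ yields the estimate, since $\sum_j(1-f_j(s))=O(\log n)$ for $j\le n$. With $1-s_n\sim\lambda/(cn)$ this gives
\begin{equation*}
1-f_k(s_n)=\frac{1}{cn/\lambda+ck}\,(1+o(1))
\end{equation*}
uniformly in $k\le n$.

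\textbf{Step 2.} Since $1-f_k(s_n)\to0$ uniformly, $\log h(f_k(s_n))=-\beta(1-f_k(s_n))(1+o(1))$. Hence
\begin{equation*}
\sum_{k<n}\log h(f_k(s_n))=-\frac{\beta}{c}\sum_{k<n}\frac{1}{n/\lambda+k}\,(1+o(1))\;\to\;-\frac{\beta}{c}\log(1+\lambda).
\end{equation*}
The initial factor satisfies $f_n(s_n)\to1$, so by bounded convergence $\x[f_n(s_n)^{D_0}]\to1$, because $D_0$ is a.s.\ finite.

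\textbf{Step 3.} For the consequence, the weak limit is absolutely continuous with $cn\to\infty$, so $\p[D_n=d]\to0$ for each fixed $d$. As in the proof of Theorem~\ref{th:RRG}, $\x[p_d^{(n)}]=\p[D_n=d]\to0$. Since $p_d^{(n)}\ge0$, this is convergence in $L^1$, and therefore $p_d^{(n)}\to0$ in probability. Alternatively, Proposition~\ref{pr:crit0} can be quoted directly for this part.

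The main obstacle is Step 1: the uniformity in $k\le n$ of the error term in $1/(1-f_k(s))$, which is needed to pass the $o(1)$ through the Riemann sum. I would handle it first by the explicit telescoping bound above, which is elementary because $f$ is quadratic with $f''$ constant. If that bound proved delicate, I would fall back on quoting the known limit theorem for critical BPI with finite $f''(1)$ and finite immigration mean (Pakes; Athreya--Ney \cite{AN04}).
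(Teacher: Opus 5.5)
Your proposal is correct. It makes the same reduction as the paper: with $\alpha=\delta$ the environment drops out, and $(D_n)$ is a critical Galton--Watson process with immigration, $f'(1)=1$, $f''(1)=2\gamma(1-\gamma)$ and immigration mean $\beta$. The difference is in the limit theorem. The paper simply cites the classical gamma limit theorem (Theorem 4 of Section VI.7 of \cite{AN04}). You instead reprove it, specialised to this case, via iterated generating functions and the Laplace transform.

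Because $f$ is quadratic, your Step 1 is in fact exact. Writing $u=1-t$, one has $1-f(t)=u(1-cu)$, so
\begin{equation*}
\frac{1}{1-f(t)}-\frac{1}{1-t}=\frac{c}{1-cu}=c+\frac{c^{2}u}{1-cu},
\end{equation*}
with $0\le c^{2}u/(1-cu)\le\tfrac43 c^{2}u$ since $c\le\tfrac14$. Telescoping gives $\frac{1}{1-f_j(s)}\ge\frac{1}{1-s}+cj\ge 1+cj$. This is the one-line justification of the uniform $O(\log n)$ bound on $\sum_{j<n}(1-f_j(s))$ that you left implicit. It then yields the uniform relative error for $1-f_k(s_n)$.

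What your route buys is self-containment and a direct check of the constants. It confirms that the normalisation is $f''(1)/2=\gamma(1-\gamma)$ and the shape is $\beta/(\gamma(1-\gamma))$, which the paper's citation leaves to the reader; the paper's later remark even misstates the variance as $\gamma(1-\gamma)$. The paper's route is shorter and does not rely on $f$ being quadratic.

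Two minor points, both shared with the paper. First, the statement tacitly requires $0<\gamma<1$ so that $c>0$. Second, your Step 3 gives $p_d^{(n)}\to0$ in $L^1$, hence in probability, which is exactly what the identity $\x[p_d^{(n)}]=\p[D_n=d]$ delivers. Quoting Proposition~\ref{pr:crit0} for an almost sure statement would add nothing justified, since its proof rests on that same identity.
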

\begin{proof}
The offspring process has mean $f'(1)=1$, and variance $\sigma^2=f''(1)=2\gamma(1-\gamma)$. The immigrant process has constant mean $\beta$. The result for $D_n$ then follows directly from an application of Theorem 4 of section VI.7 of \cite{AN04} and the result for $p^{(n)}_d$  follows from the argument used in Theorem \ref{th:RRG}.
\end{proof}
\begin{remark}
In the case $\alpha=\delta$ and criticality leads to $\gamma=1-\alpha$, and the process $D_n$ reduces to a critical branching process with immigration. The offspring distribution has mean $1$ and variance $\gamma(1-\gamma)$, so each immigrant lineage contributes a random amount of order one and fluctuations governed by this variance.  The population at time $n$ can be written as a sum of contributions from immigrants arriving at times $1,\dots,n$,
\begin{equation*}
	D_n \approx \sum_{k=1}^n Y_k,
\end{equation*}
where the $Y_k$ are approximately independent contributions from each lineage. Thus $D_n$ is the accumulation of $n$ random contributions, each with fluctuation scale $\gamma(1-\gamma)$, leading to the scaling $\gamma(1-\gamma)n$.  In this setting the limiting law is, somewhat surprisingly, explicitly Gamma. The shape parameter $\beta/\gamma(1-\gamma)$ represents the ratio between the immigration rate and the variance scale of a single lineage.

When $\beta=\gamma(1-\gamma)$, this ratio is $1$, so the Gamma limit reduces to an exponential distribution. This corresponds to the variability of individual lineages being averaged out by immigration.
\end{remark}
\appendix
\section{Results derived from Geiger \& Kersting}
The topic of Geiger \& Kersting is BPRE.  The offspring of an individual in $n$-th generation has p.g.f.\ $f_n$.  Some further p.g.f.\ notation is introduced in \cite{GK01}.  Let $f_{n,n}(s):=s$, $a_0:=1$ and 
\begin{align*}
	f_{k,n}&:=f_k\circ f_{k+1}\circ \dots\circ f_{n-1},&& 0\le k\le n-1,\\
	a_k&:=(f'_0(1)f'_1(1)\dots f'_{k-1}(1))^{-1}, && k\ge 1,\\
	g_k(s)&:=\frac{1}{1-f_k(s)}-\frac{1}{f'_k(1)(1-s)}, && 0\le s\le 1,\\
	\eta_{k,n}&:=g_k(f_{k+1,n}(0)).
\end{align*}
Here, $X_i:=\log \mu_{i-1}=\log f'_{i-1}(1)$, $S_n:=X_1+\dots +X_n$ is the random walk associated with the BPRE and $W_n:=\sum_{k=0}^{n}\exp(-S_k)$.  We formalise and extend a discussion in \cite{GK01} to arrive at the following proposition.
\begin{proposition}\label{pr:surv}
The survival probability $q_n$ is given by
\begin{equation}
	q_n=\left(\exp(-S_n)+\sum_{k=0}^{n-1} \eta_{k,n}\exp(-S_k)\right)^{-1}.\label{eq:qn3}
\end{equation}
Furthermore, if $f''_k(1)<\infty$ and $\sfrac{f''_k(1)}{(f'_k(1))^2}\le C$ where $C\ge 1$ for all environments then
\begin{equation}
	q_n\geq C^{-1}W_{n}^{-1}.\label{eq:qn4}
\end{equation}
\end{proposition}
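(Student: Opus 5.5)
The plan follows Geiger--Kersting: condition on the environment and write the survival probability through the composed generating functions,
\begin{equation*}
q_n=\p[\hat Z_n>0\mid f_0,\dots,f_{n-1}]=1-f_{0,n}(0).
\end{equation*}
The key identity comes from the definition of $g_k$. For $0\le s<1$ it gives
\begin{equation*}
\frac{1}{1-f_k(s)}=\frac{1}{f'_k(1)(1-s)}+g_k(s).
\end{equation*}
Apply this with $s=f_{k+1,n}(0)$. Since $f_k(f_{k+1,n}(0))=f_{k,n}(0)$, we obtain the one-step recursion
\begin{equation*}
\frac{1}{1-f_{k,n}(0)}=\frac{1}{f'_k(1)}\cdot\frac{1}{1-f_{k+1,n}(0)}+\eta_{k,n}.
\end{equation*}
We iterate this from $k=0$ up to $k=n-1$. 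The terminal value is $1/(1-f_{n,n}(0))=1$. Each step multiplies the accumulated prefactor by $1/f'_k(1)$, and the product of these factors is $a_k=\exp(-S_k)$. This yields
\begin{equation*}
q_n^{-1}=\frac{1}{1-f_{0,n}(0)}=a_n+\sum_{k=0}^{n-1}a_k\eta_{k,n},
\end{equation*}
which is \eqref{eq:qn3}. A routine induction on the number of steps makes this rigorous. We should also check that $f_{k+1,n}(0)<1$, so that the denominators are nonzero. If $f_{k+1,n}(0)=1$ for some $k$, then $q_n=0$ and both sides are interpreted as $+\infty$. Since $\p[\xi=0]>0$ under (H2), this degenerate case only needs a remark.

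For \eqref{eq:qn4} the main step is a uniform upper bound on $g_k$. By convexity of $f$, Taylor's theorem gives $1-f(s)\ge f'(1)(1-s)-\tfrac12 f''(1)(1-s)^2$. Combining the two fractions,
\begin{equation*}
g_k(s)=\frac{f'_k(1)(1-s)-(1-f_k(s))}{f'_k(1)(1-s)(1-f_k(s))}.
\end{equation*}
Taylor's theorem with $f''$ increasing on $[s,1]$ bounds the numerator by $\tfrac12 f_k''(1)(1-s)^2$. By convexity, $1-f_k(s)\ge$... this lower bound goes the wrong direction. Instead we use the standard bound
\begin{equation*}
0\le g_k(s)\le \frac{f''_k(1)}{f'_k(1)^2},
\end{equation*}
which is Lemma 2.1 in Geiger--Kersting. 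We would prove it directly by writing $1-f_k(s)=(1-s)\,\x[\,1+s+\dots+s^{\xi-1}]$ and comparing with the second factorial moment. This is the step I expect to be the main obstacle, because the algebra requires care; the cleanest route is the inequality $\frac{1}{1-f(s)}-\frac{1}{f'(1)(1-s)}\le \frac{f''(1)}{f'(1)^2}$, obtained via $1-f(s)\ge f'(1)(1-s)\big(1-\tfrac{f''(1)}{2f'(1)}(1-s)\big)$ combined with a case split on whether $(1-s)f''(1)/f'(1)$ is small.

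Given this bound, $\eta_{k,n}\le C$. Since $C\ge1$, we also have $a_n\le C a_n$. Therefore
\begin{equation*}
q_n^{-1}\le C\sum_{k=0}^n e^{-S_k}=CW_n,
\end{equation*}
which is \eqref{eq:qn4}.
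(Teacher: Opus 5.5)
Your proposal is correct and follows essentially the same route as the paper. Both write $q_n=1-f_{0,n}(0)$ and use the Geiger--Kersting identity (the paper cites it, while you derive it by iterating the defining relation of $g_k$). Both then bound $\eta_{k,n}\le C$ via Lemma~2.1 of Geiger--Kersting and use $C\ge1$ to absorb $a_n$. Two small remarks: the denominators are nonzero because $f'_k(1)>0$ forces $f_k(s)<1$ for $s<1$, hence $f_{k,n}(0)<1$ by backward induction from $f_{n,n}(0)=0$, and this has nothing to do with $\p[\xi=0]>0$; and your unfinished case-split sketch of Lemma~2.1 (which breaks down when $(1-s)f''(1)/f'(1)$ is large) is unnecessary once you cite the lemma, as the paper does.
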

\begin{proof}
For $0\le s\le 1$
\begin{align*}
	\x[s^{Z_n}\mid Z_0,\dots,Z_{n-1},f_0,\dots,f_{n-1}]
	&=\x\left[ \prod_{i=1}^{Z_{n-1}} s^{\xi_{n-1,i}} \middle|  Z_0,\dots,Z_{n-1},f_0,\dots,f_{n-1} \right]\\
	&=\prod_{i=1}^{Z_{n-1}} \mathbb E\!\left(s^{\xi_{n-1,i}} \mid f_{n-1}\right)\\
	&=\big(f_{n-1}(s)\big)^{Z_{n-1}}.
\end{align*}
Then, as $Z_{n-1}$ is measurable with respect to $\sigma(f_0,\dots,f_{n-1})$ and independent of $f_{n-1}$
\begin{align*}
	\x[s^{Z_n}|f_0,\dots,f_{n-1}]&=\x\big[\x[s^{Z_n}|Z_0,\dots,Z_{n-1},f_0,\dots,f_{n-1}]|f_0,\dots,f_{n-1}\big]\\
	&=\x\big[\bigl(f_{n-1}(s)\bigr)^{Z_{n-1}}|f_0,\dots,f_{n-1}\big]\\
	&=\x\big[\bigl(f_{n-1}(s)\bigr)^{Z_{n-1}}|f_0,\dots,f_{n-2}\big].
\end{align*}
Now the r.h.s.\ is just the l.h.s.\ but with $s\mapsto f_{n-1}(s)$, $Z_n\mapsto Z_{n-1}$ and $f_0,\dots,f_{n-1}\mapsto f_0,\dots,f_{n-2}$ so that an iteration gives
\begin{equation*}
	\x[s^{Z_n}|f_0,\dots,f_{n-1}]=f_0(f_1(\cdots f_{n-1}(s)\cdots)).
\end{equation*}
Hence,
\begin{equation*}
	\p[Z_n=0\mid f_0,\dots,f_{n-1}]=\x[0^{Z_n}|f_0,\dots,f_{n-1}]=f_0(f_1(\cdots f_{n-1}(0)\cdots))
\end{equation*}
and
\begin{equation*}
	q_n=1-f_0(f_1(\cdots f_{n-1}(0)\cdots)).
\end{equation*}
The following identity is shown in \cite{GK01} for $0\le s\le 1$
\begin{align*}
	\frac{1}{1-f_0(f_1(\cdots f_{n-1}(s)\cdots))}=\frac{a_n}{1-s}+\sum_{k=0}^{n-1}a_k g_k(f_{k+1,n}(s)).
\end{align*}
Then setting $a_k=\exp(-S_k)$ in the identity and evaluating at 0 gives Equation \eqref{eq:qn3}.  In Lemma 2.1 of \cite{GK01} the bound
\begin{equation*}
	g_k(s)=\frac{1}{1-f_k(s)}-\frac{1}{f'(1)(1-s)}\le \frac{f''(1)}{(f'(1))^2}
\end{equation*}
is established, provided $f''(1)<\infty$.  Applying this to Equation \eqref{eq:qn3}, substituting $f_{k+1,n}(s)$ for $s$ and evaluating at $0$ gives Inequality \eqref{eq:qn4}.
\end{proof}

\end{document}